\documentclass[a4paper, 12pt]{amsart}
\usepackage{amsfonts, amsthm, amssymb, amsmath, stmaryrd}
\usepackage{mathrsfs,array}
\usepackage{xy}
\usepackage{verbatim}
\usepackage{amscd} 
\usepackage{tikz}
\usepackage{tikz-cd}
\usetikzlibrary{matrix,arrows,decorations.pathmorphing}
\usepackage{textcomp}
\usepackage{mathtools}
\input xy
\xyoption{all}
\usepackage[unicode]{hyperref}

\usepackage{etoolbox}
\makeatletter
\patchcmd{\@bibitem}{\ignorespaces}{\label{bib-#1}\ignorespaces}{}{}
\makeatother

\usepackage[margin = 1in]{geometry}

\newcommand{\rk}{\mathrm{rk}}

\newtheorem{theorem}{Theorem}

\newtheorem{lemma}{Lemma}

\newcommand{\Cl}{\mathrm{Cl}}

\author{Manjul Bhargava}
\address{Department of Mathematics, Princeton University, Princeton, NJ, USA}
\email{bhargava@math.princeton.edu}
\author{Arul Shankar}
\address{Department of Mathematics, University of Toronto, Toronto, ON, Canada}
\email{ashankar@math.toronto.edu}
\author{Artane Siad}
\address{Yau Mathematical Sciences Center, Tsinghua University, Beijing, China}
\email{ajsiad@mail.tsinghua.edu.cn}
\author{Ashvin Swaminathan}
\address{Department of Mathematics, Harvard University, Cambridge, MA, USA}
\email{swaminathan@math.harvard.edu}

\date{\today}

\title[Cubic fields with class group of exact $2$-rank $1$]{The existence of infinitely many cubic fields with class group of exact $2$-rank $1$}

\begin{document}

\maketitle

\begin{abstract}
We show that infinitely many cubic fields have class group of $2$-rank $1$.
\end{abstract}

\section{Introduction}

For any prime $p \neq 3$ and finite abelian $p$-group $A$, the Cohen--Lenstra--Martinet--Malle heuristic predicts that $\Cl_K[p^\infty] = A$ for infinitely many cubic fields $K$.
They propose in particular that $\rk_p \, \Cl_K$, the number of non-trivial cyclic factors in the $p$-part of the class group, takes each of the values $r = 0,1,2,3,\ldots$ infinitely often. 
The only proven case of this latter prediction is $(p,r) = (2,0)$, which was proven by the first-named author in his work \cite[Theorem 5]{MR2183288} on densities of discriminants of quartic rings.

This note resolves the case $(p,r) = (2,1)$.

\begin{theorem} \label{thm: main theorem}
    There are infinitely many cubic fields with class groups of exact $2$-rank $1$.
\end{theorem}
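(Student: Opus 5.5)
The plan is to combine Bhargava's parametrization of quartic rings with the mean-value result for $2$-torsion in class groups of cubic fields. For a cubic field $K$, unramified quadratic extensions of $K$ (nontrivial elements of $\Cl_K[2]^\vee$) correspond to quartic fields $L$ with cubic resolvent $K$ whose discriminant equals that of $K$, up to the usual sign conditions. So it suffices to produce infinitely many $K$ with $|\Cl_K[2]| = 2$, i.e.\ $\rk_2\Cl_K = 1$, possibly working in the narrow class group at first and then passing to the class group.

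\textbf{Counting step.} I would use the first-named author's asymptotic counts of $\Cl_K[2]$ in the totally real and complex cubic families, taken over fields with discriminant in a finite set of local conditions. These give
\[
\mathrm{Avg}\bigl(|\Cl_K[2]|\bigr) = \tfrac{5}{4} \quad\text{and}\quad \tfrac{3}{2}
\]
respectively, together with the positive density of rank $0$ used in \cite[Theorem 5]{MR2183288}. Knowing only an average of $|\Cl_K[2]| = 2^{r}$, with $r = \rk_2\Cl_K$, cannot by itself force $r = 1$ infinitely often: the mass could be split between $r = 0$ and $r \ge 2$.

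\textbf{Forcing rank $\ge 1$.} The main step is to restrict to a subfamily where genus theory or local conditions force $r \ge 1$, and then show the average of $2^r$ over that subfamily is strictly less than $4$. If the average is below $4$ and every member has $2^r \ge 2$, then a positive proportion of the subfamily has $r = 1$.

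A natural choice of subfamily is to fix a shape of quartic field. Count pairs $(K, L)$ with $L$ a nowhere-overramified quartic $S_4$-field with cubic resolvent $K$, i.e.\ pairs $(K, \chi)$ with $\chi$ a nonzero element of $\Cl_K[2]^\vee$. By Bhargava's quartic counts these pairs number $\asymp X$. I would then bound the second moment
\[
\sum_{K} |\Cl_K[2]|\bigl(|\Cl_K[2]| - 1\bigr),
\]
which counts pairs of distinct characters. The aim is to show
\[
\sum_{K} \bigl(|\Cl_K[2]|-1\bigr)\bigl(|\Cl_K[2]|-2\bigr) < \sum_{K} \bigl(|\Cl_K[2]|-1\bigr).
\]
Since the summand on the left is $0$ when $r \le 1$ and is at least $2\bigl(|\Cl_K[2]|-1\bigr)$ when $r \ge 2$, the fields with $r \ge 2$ contribute at most half of the right-hand sum. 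The fields with $r = 1$ then carry the remaining, positive, part of the first moment, so there must be infinitely many of them.

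\textbf{Main obstacle.} The hard part is this second moment, the average of $|\Cl_K[2]|^2$, which in principle requires counting $V_4$-type data: biquadratic unramified extensions, equivalently pairs of quartic fields sharing a resolvent. That is not available in general.

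To get around it, I would first try a simpler trick. Use the exact first moments in the two signatures, together with the known relation between $\Cl_K[2]$ and the narrow class group $\Cl_K^+[2]$ via reflection (Armitage--Fröhlich bounds and the Scholz-type relation $\rk_2\Cl^+_K - \rk_2\Cl_K \le 1$ for cubics). Pick a family where $\rk_2\Cl_K^+ \ge 1$ is forced but $\rk_2\Cl_K$ is controlled. Then a pigeonhole on the average of the form $1 + \tfrac{1}{4}$ versus $1 + 2^{-1}\cdot\ldots$ shows that rank exactly $1$ must occur infinitely often.

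If that fails, the fallback is the genuine second-moment count via Bhargava's $2\otimes 3\otimes 3$-type or pairs-of-ternary-quadratic-forms parametrization, handled with geometry-of-numbers upper bounds. An upper bound suffices here, not an asymptotic.
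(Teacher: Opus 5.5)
\textbf{Gap: the key moment input is missing.} Your two reductions are fine. In a family with $|\Cl[2]|\ge 2$ for almost all members, an average below $4$ forces a positive proportion with $\rk_2\Cl=1$. Weighting by $|\Cl_K[2]|-1$ correctly turns the problem into a second-moment question. But you never supply the input either reduction needs, and that input is the whole difficulty.

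The average of $|\Cl_K[2]|^2$ over all cubic fields is not known. The pairs-of-ternary-quadratic-forms parametrization counts quartic rings, i.e.\ one character $\chi\in\Cl_K[2]^\vee$ at a time, so it yields only the first moment. No known parametrization, of $2\otimes 3\otimes 3$ type or otherwise, counts pairs of characters of a fixed $K$, so your geometry-of-numbers fallback has nothing to run on. The narrow-class-group detour is not an argument either. You name no family with forced $\rk_2\Cl^+_K\ge 1$ and a computable average. Even a forced $\rk_2\Cl^+_K=1$ would only give $\rk_2\Cl_K\in\{0,1\}$. Your ``natural subfamily'' of pairs $(K,L)$ is a reweighting, not a subfamily with forced rank, and it leads straight back to the unknown second moment.

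\textbf{What the paper does instead.} The missing idea is to pass to \emph{thin} families where extra structure supplies exactly these inputs.
\begin{itemize}
\item In the paper's first proof, the unit-monogenised family $+B^2_{1,1}$ consists of maximal $x^3+ax^2+bx+1$ with $a,b>0$ and $(a,b)$ in three fixed residue classes mod $4$. It has $|\Cl[2]|\ge 2$ for all but negligibly many members. Its average $|\Cl[2]|$ is at most $3$ by \cite[Theorem 5]{MR4891959}, so at least half its members have $|\Cl[2]|=2$. This is your forced-rank strategy with an actual family.
\item In the second proof, monogenised totally real fields have first moment $3/2$ \cite{BHSpreprint} and second moment at most $3$ \cite{2506.05539}.
\item Since these are families of polynomials, one then needs a multiplicity bound (Birch--Merriman, effectivised by Bennett) to get infinitely many distinct fields.
\end{itemize}

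\textbf{Your target inequality is stronger than needed.} You aim for $\sum(h-1)(h-2)<\sum(h-1)$ with $h=|\Cl_K[2]|$. If $0\%$ of members had $h=2$, then $h-2\ge 2$ for $h\ge 4$ gives $\sum(h-1)(h-2)\ge(2-o(1))\sum(h-1)$. So it already suffices to show $\sum(h-1)(h-2)\le(2-\epsilon)\sum(h-1)$, i.e.\ $\mathbb{E}[h^2]<5\,\mathbb{E}[h]-4$, which is the paper's slope condition. This matters: with the known inputs $\mathbb{E}[h]=3/2$ and $\mathbb{E}[h^2]\le 3$, your inequality is obtained only with $\le$, whereas the sharp one holds with room to spare.
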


The ingredients of two different proofs of Theorem \ref{thm: main theorem} are essentially contained in the previous works \cite{MR4891959} and \cite{2506.05539}, and Theorem \ref{thm: main theorem} follows from them through a combination of favourable numerology, a multiplicity-of-monogeniser argument, and, for the second, considerations of the closed convex hull of $\{(2^n,2^{2n}) \colon n \in \mathbb{N}\} \subseteq \mathbb{R}^2$ spelled out in \S \ref{sec: proof}. Of the nine distinct stable families of $(a,d)$-monogenised\footnote{By $(a,d)$-monogenised cubic fields, for $a,d \in \mathbb{Z}$, we mean those of the form $K = \mathbb{Q}[x]/(f(x))$ where $f(x) = ax^3 + bx^2 + cx + d$ with $b,c \in \mathbb{Z}$ and for which the order $R_f = \langle 1, ax, ax^2+bx \rangle_\mathbb{Z}$ is maximal in $K$. The unit-monogenised fields mentioned later correspond to the case $(a,d) = (1,1)$.} cubic fields identified in \cite{MR4891959}, a single one has the configuration of a large-enough uniform lower bound on $\lvert \Cl[2] \rvert$ and a small-enough upper bound on average $\lvert \Cl[2] \rvert$ required for our argument to go through; while in \cite{2506.05539}, only real monogenised fields give a first/second $\lvert \Cl[2] \rvert$ moment vector lying close enough to the boundary of $\overline{\mathrm{co}}\{(2^n,2^{2n}) \colon n \in \mathbb{N}^+\}$; complex ones do not. 

Constructing number fields with class groups of large $p$-rank in the absence of genus theory\footnote{Genus theory makes these kinds of problems easy. Consider, for example, the case of imaginary quadratic fields. There, $\rk_2 \,\mathbb{Q}(\sqrt{d})$ with $d < 0$ square-free is completely determined by $d$: It is $\omega(\Delta_d)-1$ with $\Delta_d = d$ or $4d$ if $d \equiv 1$ or $2,3 \pmod{4}$ respectively, and $\omega(\cdot)$ the number of distinct prime factors.}, and in particular constructing cubic fields with large $\rk_2 \, \Cl$, is an old and active area which has occupied mathematicians since at least the 1920s. In 1922, Nagell proved that infinitely many quadratic fields had a copy of $\mathbb{Z}/n$ in their class groups, kicking off a steady line of results reaching to the present \cite{MR3069394}, \cite{MR85301}, \cite{MR79613},  \cite{MR236144}, \cite{MR360518}, \cite{MR797674}, \cite{MR864264}, \cite{MR866122}, \cite{MR1875340}, \cite{MR2590188}, \cite{MR3749365}, \cite{MR4033251},
each exploiting special structures in thin families. For $2$-ranks of class groups of cubic fields, most relevant to Theorem \ref{thm: main theorem}, Nakano \cite{MR937193} showed that infinitely many had $\rk_2 \Cl \ge 6$, Kulkarni \cite{MR3749365} that this could be pushed to $\rk_2 \Cl \ge 8$, and Gillibert--Levin \cite{MR4033251} to $\rk_2 \Cl \ge 11$. The $2$-rank of the class group of cubic fields is also known to be connected to the $2$-Selmer group of elliptic curves by \cite{MR480416} and \cite{MR1432782}. 
In contrast, constructing infinitely many cubic fields with class groups of exact $2$-rank $1,2,3,\ldots$ is much more delicate because the extra structure responsible for large $\rk_2 \Cl$ needs to be shown not to beget more rank than needed. Before Theorem \ref{thm: main theorem}, nothing was known about the infinitude of cubic fields with class groups of fixed exact $2$-rank beyond rank $0$.

The two approaches to Theorem \ref{thm: main theorem} alluded to above may be descriptively called the \textit{moment approach} for the one based on \cite{2506.05539} and the \textit{anomaly approach} for the one based on \cite{MR4891959}. Let us explicate both.

Cohen--Lenstra distributions tend to be determined by their moments. The idea behind the moment approach is that knowledge of finitely many moments might be sufficient to establish Theorem \ref{thm: main theorem}. As we explain in \S \ref{sec: proof BSS}, Theorem \ref{thm: main theorem} follows once enough information about the average of the vector $(\lvert \Cl[2]\rvert,\lvert \Cl[2]\rvert^2)$ is found to place it outside of the closed convex hull $\overline{\mathrm{co}}(\{(2^n,2^{2n}): \mathbb{N}\} \setminus \{(2,4)\})$. \cite[Theorem 4]{BHSpreprint} and \cite[Theorem 1.2]{2506.05539} are enough to reach this conclusion in the family of \textit{monogenised} cubic fields, those with ring of integers $\mathbb{Z}[\alpha]$. 

In the anomaly approach through \cite{MR4891959}, our idea is to leverage anomalous class group behaviour in a thin family of cubic fields consisting of fields that we call \textit{unit-monogenised}, which are characterised by having ring of integers $\mathbb{Z}[\alpha]$ with $\alpha$ a unit. 
In \cite{MR4891959}, these are divided into six distinct congruence subfamilies for which we prove upper bounds\footnote{Upper bounds which are equalities conditional on the tail estimate \cite[Remark 5.5]{MR4891959}.} on average $\rvert \Cl[2] \vert$ which remain stable under further congruence conditions. 
Some of the bounds in these stable families are anomalously large when viewed through the lens of the Cohen--Lenstra heuristics.
One of these stable families, the one we call $+B_{1,1}^2$, has $\lvert \Cl[2] \rvert \ge 2$ for all but negligibly many members.
We show that the average $\lvert \Cl[2] \rvert$ in $+B_{1,1}^2$ is bounded above by $3$, giving a positive proportion of elements of $+B_{1,1}^2$ with class groups of exact $2$-rank $1$ and hence, by the argument of \S \ref{sec: proof SSS}, Theorem \ref{thm: main theorem}. 
Because $+B_{1,1}^2$ is thin the geometry-of-numbers method introduced in \cite{MR2183288} does not apply and we obtain the upper bound of $3$ in an unorthodox way: We deploy its framework, but with Davenport's lemma, its main engine, replaced with effective dynamics \cite{MR3025156}.

Both approaches yield asymptotic lower bounds for cubic fields of discriminant at most $X$ and class group of exact $2$-rank $1$: $X^{1/2}$ for \cite{MR4891959} and $X^{5/6}$ for \cite{2506.05539}; both are comprised entirely of real cubic fields.

\begin{theorem} \label{thm: quantitative main theorem}
    We have $\#\{K \, \colon \, \lvert \mathrm{Disc} \, K \rvert < X \, \textrm{ and } \, \rk_2 \, \Cl_K = 1 \} \gg X^{5/6}$.
\end{theorem}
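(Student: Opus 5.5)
We would prove Theorem \ref{thm: quantitative main theorem} by the moment approach, working in the family $\mathcal{F}$ of real monogenised cubic fields, i.e.\ pairs $(K,\alpha)$ with $\mathcal{O}_K=\mathbb{Z}[\alpha]$, ordered by height so that $\#\{f\in\mathcal{F}: H(f)<X\}\asymp X^{5/6}$ once discriminants are bounded by $X$. Two inputs are assumed. The first is the average of $\lvert\Cl[2]\rvert$ over this family from \cite[Theorem 4]{BHSpreprint}. The second is the average of $\lvert\Cl[2]\rvert^2$ (or at least an upper bound for it) from \cite[Theorem 1.2]{2506.05539}. For a real cubic field, $\lvert\Cl_K[2]\rvert=2^{n}$ with $n=\rk_2\Cl_K$. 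Writing $\mu_n$ for the limiting proportion (liminf/limsup) of family members with $\rk_2=n$, we get the vector
\[(M_1,M_2):=\lim\ \frac{1}{\#\mathcal{F}_X}\sum_{f\in\mathcal{F}_X}\bigl(\lvert\Cl[2]\rvert,\lvert\Cl[2]\rvert^2\bigr),\]
which is a limit point of convex combinations $\sum_n\mu_n(2^n,2^{2n})$. It therefore lies in $\overline{\mathrm{co}}\{(2^n,2^{2n})\}$.

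The key step is the following. Suppose that the proportion with $n=1$ tended to $0$ along some sequence $X\to\infty$. Then $(M_1,M_2)$, or the relevant inequality version using an upper bound for $M_2$ and an equality for $M_1$, would lie in $C:=\overline{\mathrm{co}}(\{(2^n,2^{2n}):n\ge0\}\setminus\{(2,4)\})$. We describe $C$ explicitly. Since $n\mapsto 2^{2n}$ is convex in $2^n$, the lower boundary of $C$ between $x=1$ and $x=4$ is the chord from $(1,1)$ to $(4,16)$, namely $y=5x-4$. Elsewhere it is the usual polygonal chords through consecutive points, and the second coordinate is unbounded above, so the closure only adds vertical rays. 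Hence every point of $C$ with $1\le x\le4$ satisfies $y\ge 5x-4$. We then check numerically that the known values violate this, i.e.\ that $M_2<5M_1-4$ (e.g.\ with $M_1$ near $3/2$ one needs $M_2<7/2$). The observation in the introduction, that real monogenised fields have moment vectors close enough to the boundary while complex ones do not, is exactly this inequality. It contradicts membership in $C$, so $\liminf\mu_1>0$. More quantitatively, since $y-(5x-4)\ge0$ at every lattice point $(2^n,2^{2n})$ with $n\ne1$ and equals $-2$ at $n=1$, we get $2\mu_1\ge 5M_1-4-M_2>0$. This needs $M_2$ only as an upper bound, and $M_1$ as an exact average.

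It remains to pass from monogenised pairs to fields. A given field $K$ can arise from many monogenisers $\alpha$ (up to $\mathrm{GL}_2(\mathbb{Z})$-type equivalence and translation). So from $\gg X^{5/6}$ pairs with $\rk_2=1$ we must extract $\gg X^{5/6}$ distinct fields. For this we use the multiplicity-of-monogeniser argument: by Evertse--Győry-type bounds the number of monogenisers per field is uniformly bounded (at most $10$ for cubic index forms up to equivalence). Moreover the pairs of height $<X^{1/2}$ give fields of discriminant $\ll X$ up to a constant rescaling. Together these give the stated count.

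The main obstacle is making the numerical inequality hold with the exact constants in \cite{BHSpreprint} and \cite{2506.05539}. In particular, the second moment there may be proven only as an upper bound, or only after imposing congruence conditions. We would first try the full real monogenised family; if the margin is too thin, we would restrict to a congruence subfamily where the local densities improve $5M_1-4-M_2$.
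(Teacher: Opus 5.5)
Your proposal is correct and is essentially the paper's moment-approach proof. It uses the same family $\mathcal{F}_{1,+}$ of totally real monogenised cubic fields, the same inputs (average $\lvert\Cl[2]\rvert=3/2$ from \cite[Theorem 4]{BHSpreprint} and average $\lvert\Cl[2]\rvert^2\le 3$ from \cite[Theorem 1.2]{2506.05539}), the same obstruction via the chord $y=5x-4$ from $(1,1)$ to $(4,16)$, and a uniform bound on monogenisers per field to pass from polynomials to fields. With these values your closing hedge is unnecessary: $5M_1-4-M_2\ge\frac{15}{2}-7=\frac12$, so your functional $y-(5x-4)$ gives $\liminf\mu_1\ge\frac14$ on the full family, which is a slightly cleaner, quantitative form of the paper's slope comparison. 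One small fix is to keep the height normalisation consistent: with the paper's $H=\max\{\lvert a^2-3b\rvert^3,\lvert -2a^3+9ab-27c\rvert^2/4\}$, it is $H\ll X$, not $H<X^{1/2}$, that gives both $\lvert\mathrm{Disc}\rvert\ll X$ and $\asymp X^{5/6}$ members.
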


We point out that the $X^{5/6}$ is less than the $X$ predicted by Cohen--Lenstra--Martinet--Malle and that the $X^{1/2}$ from \cite{MR4891959} could be improved by considering, and summing, the family $+B_{1,1}^2$ across all $(a,d)$-monogenised fields.

The idea of leveraging anomalous class group behaviour represents a conceptual departure from mainstream strategies aimed at establishing the existence form of Cohen--Lenstra. It has a clear advantage: it allows us to achieve with an upper bound on a first moment what, in the approach via \cite{2506.05539}, required both a lower bound on a first moment and an upper bound on a second moment.

We remark that Theorem \ref{thm: main theorem} also echoes, on the class group side, recent developments on the infinitude of elliptic curves of small positive ranks over $\mathbb{Q}$ by Zywina \cite{2502.01957}.
One might speculate that attacking the problem of the infinitude of cubic fields with class groups of fixed exact $2$-rank $>1$ might be facilitated by studying anomalous families. In fact, as explained in \cite{ShankarSiadSwaminathanNumericsandConjecturesUnitMonogenized}, we expect an average $\lvert \Cl[2] \rvert^2$ of $12$ in $+B_{1,1}^2$: Considering $\overline{\mathrm{co}}\{(2^n,2^{2n}) \colon n \in \mathbb{N}^+\}$, an upper bound of this strength and lower bound of $3$ on average $\lvert \Cl[2]\rvert$ in $+B_{1,1}^2$ would then give the counterpart of Theorem \ref{thm: main theorem} for exact $2$-rank $2$.

\section{Proofs} \label{sec: proof}

\subsection{Proof via \cite{MR4891959}} 
\label{sec: proof SSS}

Let $+B_{1,1}^2$ denote the family of cubic polynomials of the form $f(x) = x^3+ax^2+bx+1 \in \mathbb{Z}[x]$ that are maximal and for which $(a,b) \pmod{4} \in \{(4,4),(1,2),(2,1)\}$ and $a,b >0$.  
We will denote the discriminant of $f(x)$ by $\Delta(a,b)$ and speak interchangeably of forms in $+B_{1,1}^2$ and of their associated cubic fields $\mathbb{Q}[x]/(f(x))$. In what follows, when talking of averages and of proportions, we may order $+B_{1,1}^2$ either by symmetric height, $\max \{ \lvert a \rvert, \lvert b \rvert \}$, or by weighted height, $\max \{ \lvert a \rvert, \lvert b \rvert^{1/2} \}$.

We will need two lemmas for our proof of Theorems \ref{thm: main theorem} and \ref{thm: quantitative main theorem}.

\begin{lemma} \label{lem: finite repetition}
    Up to isomorphism, a cubic field can occur at most $60$ times as the field $\mathbb{Q}[x]/(f(x))$ associated to a maximal polynomial of the form $f(x) = x^3+ax^2+bx+1$ with $a,b \in \mathbb{Z}$. 
\end{lemma}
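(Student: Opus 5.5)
The plan is to translate each occurrence of $K$ into a \emph{unit monogeniser} of $\mathcal{O}_K$, and then count those monogenisers in two stages. The first stage bounds the number of equivalence classes of monogenisers using classical Thue-equation bounds. The second stage bounds the number of norm $-1$ units inside each class by elementary algebra.

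\textbf{Step 1 (reduction to monogenisers).} Suppose $f(x)=x^3+ax^2+bx+1$ is maximal and $\mathbb{Q}[x]/(f(x))\cong K$. Fix an isomorphism and let $\alpha\in K$ be the image of $x$. Maximality says $R_f=\mathbb{Z}[\alpha]=\mathcal{O}_K$. Since $f(0)=1$, we have $N_{K/\mathbb{Q}}(\alpha)=-1$, so $\alpha$ is a unit of norm $-1$ generating $\mathcal{O}_K$. Conversely, $\alpha$ determines $f$ as its characteristic polynomial. Hence the number of distinct such $f$ giving $K$ is at most the number of $\alpha\in\mathcal{O}_K$ with $\mathbb{Z}[\alpha]=\mathcal{O}_K$ and $N(\alpha)=-1$. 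When $K$ is Galois, conjugate $\alpha$'s give the same $f$, which only helps.

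\textbf{Step 2 (finitely many classes of monogenisers).} Call two generators $\alpha,\beta$ of $\mathcal{O}_K$ equivalent if $\beta=\pm\alpha+n$ for some $n\in\mathbb{Z}$. Equivalence classes of monogenisers are in bijection with pairs $\pm(x,y)$ of solutions to $F(x,y)=\pm1$. Here $F$ is the index form of $\mathcal{O}_K$, a binary cubic form of discriminant $\mathrm{Disc}\,K$; this is the Delone--Faddeev/Gan--Gross--Savin correspondence. I would then invoke the known bounds on cubic Thue equations:
\begin{itemize}
\item Delone--Nagell: at most $5$ solutions when the discriminant is negative;
\item Bennett: at most $10$ solutions when the discriminant is positive.
\end{itemize}
In either case there are at most $10$ classes.

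The main obstacle is bookkeeping. I must check that these bounds count solutions modulo $\pm$ and apply to $|F(x,y)|=1$, so that they really bound classes and not half-classes. If the bound turned out to count $(x,y)$ and $(-x,-y)$ separately, it would only improve the constant, so I would state it in the form ``at most $10$ classes''.

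\textbf{Step 3 (units within a class).} Fix a representative $\alpha_0$ with characteristic polynomial $g$. For $\beta=\alpha_0+n$ we have $N(\beta)=-g(-n)$, so the condition $N(\beta)=-1$ becomes $g(-n)=1$. Since $g$ is a cubic in $n$, this has at most $3$ integer solutions. Similarly, for $\beta=-\alpha_0+n$ we get $N(\beta)=g(n)$, so the condition $N(\beta)=-1$ becomes $g(n)=-1$, giving at most $3$ more solutions. Thus each class contains at most $6$ admissible units.

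Combining Steps 2 and 3 gives at most $10\cdot 6=60$ admissible $\alpha$. By Step 1, $K$ therefore arises from at most $60$ maximal polynomials of the given shape.
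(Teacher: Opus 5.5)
Your proof is correct and takes the paper's approach. The paper uses Bennett's effective Birch--Merriman bound to get at most $20$ monogenisers up to translation, and then notes that at most $3$ translates have constant coefficient $1$; this is your $10$ classes up to $\pm\alpha+n$ times $3+3$ admissible translates. Your Step 2 bookkeeping worry resolves itself, since $F(-x,-y)=-F(x,y)$ puts the solutions of $F=1$ in bijection with your $\pm$-classes.
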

\begin{proof}
    Under translations $x \mapsto x + n$, a cubic polynomial of the form $f(x) = x^3+ax^2+bx+1$ has a constant coefficient of $1$ at most three times. By Bennett's effectivisation \cite{Ben2001}\cite[Theorem A.1]{2409.02627} of the Birch--Merriman Theorem \cite{MR0306119}, a cubic field has at most $20$ monogenisers up to translations $x \mapsto x + n$, giving the statement. 
\end{proof}

\begin{lemma}\label{lem: infinitely many have rk_2 = 1}
    There are infinitely many members of the family $+B_{1,1}^2$ that have $\rk_2 \, \Cl = 1$.
\end{lemma}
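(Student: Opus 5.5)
The plan is to combine two inputs from \cite{MR4891959}: a uniform lower bound $\lvert \Cl[2] \rvert \ge 2$ for all but a negligible proportion of members of $+B_{1,1}^2$, and an upper bound on the average of $\lvert \Cl[2] \rvert$ over $+B_{1,1}^2$. A simple pigeonhole argument on the values $\lvert \Cl[2]\rvert \in \{1,2,4,8,\ldots\}$ then gives a positive proportion with $\lvert\Cl[2]\rvert = 2$.

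First, I would take the lower bound as given. The anomalous structure of $+B_{1,1}^2$, coming from the congruence conditions on $(a,b)$ together with positivity $a,b>0$, forces $\lvert\Cl_K[2]\rvert \ge 2$ for all but $o(N(H))$ members of height at most $H$. Here $N(H)$ counts members of $+B_{1,1}^2$ of height $< H$. I would quote this from \cite{MR4891959}, where it presumably arises from an explicit unramified quadratic extension or a nontrivial $2$-torsion class built from the unit $\alpha$ with sign constraints. If it is only available up to a negligible exceptional set, I will carry that set along.

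Second, I need the average bound
\[
\limsup_{H\to\infty} \frac{1}{N(H)} \sum_{\substack{f \in +B_{1,1}^2 \\ \mathrm{ht}(f) < H}} \lvert \Cl_{K_f}[2] \rvert \;\le\; 3 - \delta
\]
for some $\delta > 0$. A bound $\le 3$ is not quite enough on its own: if every member had $\lvert\Cl[2]\rvert \in\{2,4\}$ with equal frequency, the average would be exactly $3$. So I would aim for a strict inequality, or else argue with the stable-subfamily bound of \cite{MR4891959}, which (per the introduction) is an upper bound strictly below $3$ or at least allows the following accounting. Let $p_n$ be the lower density of members with $\lvert\Cl[2]\rvert = 2^n$, so that $p_0 = 0$. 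Then
\[
\text{avg} \ge 2p_1 + 4(1-p_1),
\]
so $\text{avg} \le 3-\delta$ gives $p_1 \ge (1+\delta)/2 > 0$. With an average bound of exactly $3$, this yields $p_1 \ge 1/2$, which is still positive. Indeed $2p_1 + 4(1-p_1) \le 3$ implies $p_1 \ge 1/2$, so the bound $3$ suffices and strictness is unnecessary. Thus at least half of $+B_{1,1}^2$ (in the limit) has $\Cl[2] \cong \mathbb{Z}/2$, i.e.\ $\rk_2 \Cl = 1$. Since $N(H) \to \infty$ (the family is defined by congruence and maximality conditions of positive local density), there are infinitely many such $f$.

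The main obstacle is the average upper bound of $3$ in this thin two-parameter family. As the introduction notes, Davenport's lemma does not apply, so the count of $\mathrm{SL}_3$ or $\mathrm{SL}_2$-orbits on the relevant integral pairs of ternary forms (Bhargava's parametrisation of $2$-torsion) must be done with effective equidistribution of the family's orbits \cite{MR3025156}. Sieving to maximal polynomials requires a uniformity (tail) estimate. For the upper bound, only the easy direction of the sieve is needed, which is why an upper bound rather than an equality is available. I would first check whether the bound proven in \cite{MR4891959} for this specific stable family is numerically $\le 3$ under either height ordering. If so, the lemma reduces to the density computation above together with confirming that the exceptional set in the lower bound is negligible relative to $N(H)$.
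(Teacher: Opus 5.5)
Your proposal is correct and is essentially the paper's argument. It combines two inputs: the lower bound $\lvert \Cl[2]\rvert \ge 2$ off a negligible set (the paper cites the argument of \cite[Theorem 2.8]{MR4891959} and uses an effective Hilbert irreducibility theorem to make the exceptional members negligible), and the average upper bound of $3$ from \cite[Theorem 5]{MR4891959}; then $2p_1 + 4(1-p_1) \le 3$ forces at least $50\%$ of the family to have $\lvert \Cl[2]\rvert = 2$, with no strict bound $3-\delta$ needed, as you concluded.
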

\begin{proof}
    By any effective form Hilbert irreducibility, \cite[Theorem 5]{MR4891959} and the argument of \cite[Theorem 2.8]{MR4891959}, at least $50\%$ of fields in $+B_{1,1}^2$ have $\lvert \Cl[2] \rvert = 2$. 
\end{proof}

\begin{proof}[Proof of Theorem \ref{thm: main theorem}]
Because infinitely many members of $+B_{1,1}^2$ have class groups of exact $2$-rank $1$ by Lemma \ref{lem: infinitely many have rk_2 = 1} and any cubic field can be repeated as a member of $+B_{1,1}^2$ at most finitely many times by Lemma \ref{lem: finite repetition}, there are infinitely many totally real unit-monogenised cubic fields of exact $2$-rank $1$ and Theorem \ref{thm: main theorem} follows. 
\end{proof}

\begin{proof}[Proof of Theorem \ref{thm: quantitative main theorem} with $X^{1/2}$]
    The discriminant of $x^3+ax^2+bx+1$ is $a^2 b^2 - 4 a^3  - 4 b^3  + 18 a b -27$. Thus, $H_{\mathrm{bal}}(a,b) < X^{1/4}$ implies that $\lvert \Delta(a,b) \rvert < X$. Since maximal forms of balanced height at most $X^{1/4}$ grow like a constant multiple of $X^{1/2}$ by \cite[Theorem 5.1]{MR4891959}, a positive proportion of which lie in $+B_{1,1}^2$, of which at least $50\%$ have $\rk_2 \, \Cl = 1$, we obtain Theorem \ref{thm: quantitative main theorem}.
\end{proof}

\subsection{Proof via \cite{2506.05539}} \label{sec: proof BSS} Let $\mathcal{F}_{1,+}$ denote the family of cubic polynomials of the form $f(x) = x^3+ax^2+bx+c \in \mathbb{Z}[x]$ which are maximal and totally real, up to the substitution $x \mapsto x+n$. We order $\mathcal{F}_{1,+}$ by $\max\{\lvert a^2 - 3b \rvert^3, \lvert -2a^3+9ab-27c \rvert^2/4 \}$. It was shown in \cite[Theorem 4]{BHSpreprint} that the average $\lvert \Cl[2] \rvert$ is equal to $3/2$
and in \cite[Theorem 1.2]{2506.05539} that the average $\lvert\Cl[2]\rvert^2$ over $\mathcal{F}_{1,+}$ is bounded above by $3$.

\begin{proof}[Proof of Theorems \ref{thm: main theorem} and \ref{thm: quantitative main theorem}]
Suppose that $0\%$ of the members of $\mathcal{F}_{1,+}$ have $\lvert \Cl[2]\rvert = 2$. We would then have a solution\footnote{For this argument it doesn't matter if the $p_i$ are constants or oscillate as $H \rightarrow \infty$.} $(p_1,p_4,p_8,\ldots)$ to the infinite system
 $$\begin{pmatrix}
    1 &4 &8 &16 &\cdots \\ 
    1 &4^2 &8^2 &16^2 &\cdots \\
\end{pmatrix} \begin{pmatrix}
    p_1\\
    p_4\\
    p_8\\
    \vdots
\end{pmatrix} = \begin{pmatrix}
    3/2\\
    \beta\\
\end{pmatrix}
$$
with $p_1 + p_4 + p_8 + \ldots = 1$ and $\beta \le 3$. Geometrically speaking: The closed convex hull $\overline{\mathrm{co}}(\{(2^n,2^{2n}): \mathbb{N}\} \setminus \{(2,4)\})$ contains the point $(3/2,\beta)$. This is impossible since the slope between $(1,1)$ and $(3/2,\beta)$ is strictly smaller than that between $(1,1)$ and $(4,4^2)$. We conclude that a positive proportion of members of $\mathcal{F}_{1,+}$ have $\lvert \Cl[2] \rvert = 2$, giving Theorem \ref{thm: main theorem} by Birch--Merriman \cite{MR0306119} and Theorem \ref{thm: quantitative main theorem} by \cite{2506.05539}. 
\end{proof}

For the family $\mathcal{F}_{1,-}$ of complex monogenised fields we cannot rule out that $0\%$ of the members of $\mathcal{F}_{1,-}$ have $\lvert \Cl[2] \rvert = 2$. Indeed, \cite{2506.05539} and \cite{BHSpreprint} give a moment vector of the form $ \begin{psmallmatrix} 2\\ \beta' \end{psmallmatrix}$ with $\beta' \le 6$, which could lie on the slope between $(1,1)$ and $(4,4^2)$.

This moment vector does, however, rule out simultaneous $0\%$ for both $\lvert \Cl[2] \lvert = 2$ and $\lvert \Cl[2] \lvert = 4$ and we find a dichotomy which also holds in $-B_{1,1}^2$ for simple reasons.

\begin{theorem}
    Infinitely many complex cubic fields have class groups of $2$-rank $1$ or $2$.\footnote{The reasoning of \S \ref{sec: proof BSS} and conjectures of \cite{SiadVenkatesh} suggest the existence of bounds on the second moment of $\lvert \Cl[2] \rvert$ over non-evenly ramified $n$-monogenised cubic fields which would establish Theorem \ref{thm: main theorem} for both signatures. Indeed, \cite{SiadVenkatesh} predicts a moment vector of $(3/2,3)$ for imaginary cubic fields of this type.}
\end{theorem}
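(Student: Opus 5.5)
Work in the family $\mathcal{F}_{1,-}$ of maximal complex monogenised cubic polynomials $x^3+ax^2+bx+c$ up to $x\mapsto x+n$, ordered as $\mathcal{F}_{1,+}$ is ordered in \S\ref{sec: proof BSS}. By \cite[Theorem 4]{BHSpreprint} the average of $\lvert \Cl[2]\rvert$ over $\mathcal{F}_{1,-}$ is $2$, and by \cite[Theorem 1.2]{2506.05539} the average of $\lvert \Cl[2]\rvert^2$ is at most $\beta'\le 6$, as recorded above. We argue by contradiction, in the same convex-hull style as the proof of Theorems \ref{thm: main theorem} and \ref{thm: quantitative main theorem} via \cite{2506.05539}.

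Suppose that $0\%$ of members of $\mathcal{F}_{1,-}$ have $\lvert \Cl[2]\rvert\in\{2,4\}$. Along a subsequence of heights $H\to\infty$, the proportions $p_{2^n}(H)$ with $n\ne 1,2$ then give probability vectors supported on $\{1,8,16,\dots\}$, up to an error $o(1)$. The first-moment limit is an equality, so no mass escapes to infinity in the first moment. The second moment is only bounded above, and this is exactly the direction we need. Hence
\[
\sum_{n\ne1,2} p_{2^n} 2^n \to 2, \qquad \limsup \sum_{n\ne1,2} p_{2^n} 2^{2n} \le 6 .
\]

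Every point $(2^n,2^{2n})$ with $n\ge3$ satisfies $2^{2n}-1 \ge 9\,(2^n-1)$. Indeed, this is equivalent to $2^n+1\ge 9$, which holds for $n\ge3$. The inequality $y-1\ge 9(x-1)$ is linear and holds with equality at $(1,1)$. It therefore passes to convex combinations, including the $o(1)$ perturbations, and to limits. Applying it to the moment vector gives $\limsup(\text{second moment}) - 1 \ge 9\,(2-1)=9$, so the second moment is at least $10$. This contradicts $\beta'\le 6$. Geometrically, $(2,\beta')$ lies below the chord from $(1,1)$ to $(8,64)$, whose slope $9$ exceeds the slope $5$ from $(1,1)$ to $(4,16)$ that the paper notes is still attainable. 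The one subtlety is that the first moment must be a genuine limit, which \cite{BHSpreprint} provides. So a positive proportion of $\mathcal{F}_{1,-}$ has $\rk_2\Cl\in\{1,2\}$.

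It remains to pass from polynomials to fields. A positive proportion of an infinite family is infinite. By Birch--Merriman \cite{MR0306119}, as effectivised by Bennett and used in Lemma \ref{lem: finite repetition}, each field arises from boundedly many members of $\mathcal{F}_{1,-}$ up to translation. This yields infinitely many distinct complex cubic fields with $2$-rank $1$ or $2$. The main obstacle is only bookkeeping: checking that the cited moment results hold for the complex subfamily with exactly the same ordering, so that the proportions and moments refer to the same averaging. A second route runs through $-B_{1,1}^2$ using the uniform lower bound $\lvert\Cl[2]\rvert\ge2$ and an upper bound on the average. I would mention it but not rely on it.
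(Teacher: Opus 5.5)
Your proposal is correct and follows the paper's route. The paper observes that the complex moment vector $(2,\beta')$ with $\beta'\le 6$ rules out $0\%$ simultaneously for $\lvert\Cl[2]\rvert=2$ and $\lvert\Cl[2]\rvert=4$, then passes to fields via Birch--Merriman as in \S\ref{sec: proof BSS}; your chord inequality $y-1\ge 9(x-1)$ through $(1,1)$ and $(8,64)$, together with the bounded $o(1)$ contribution from the values $2$ and $4$, is exactly the computation the paper leaves implicit.
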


\section*{Acknowledgements}

We will not repeat the names listed in the acknowledgements of \cite{MR4891959} and \cite{2506.05539}, who naturally deserve to be thanked again, but would like to give special recognition to Iman Setayesh who had several key discussions with the first and second named authors about the heart of \cite{MR1230290}, but chose not to be named as an author of \cite{MR4891959}, and to the anonymous referee of \cite{MR4891959}, whose comments led us to break up unit-monogenised cubic fields into stable families. We also thank Jordan Ellenberg for insightful questions.

\bibliographystyle{amsalpha}
\bibliography{references}

\end{document}